%%%%%%%Header%%%%%%%%%%%%%%%%%%%%%%%%%%%%%%%%%%%%%%

\documentclass[11pt]{amsart}
\usepackage{amsmath}

\usepackage{amssymb}
\usepackage{fancyhdr}
\usepackage{amsthm}
\usepackage{float}
\usepackage{graphicx}

\usepackage{enumerate}
\usepackage{accents}
\usepackage{url}

%%%%%%%Definitions of theorems%%%%%%%%%%%%%%%%%%%%%%

\theoremstyle{plain}
  \newtheorem{theorem}{Theorem}

  \newtheorem{corollary}[theorem]{Corollary}

\theoremstyle{definition}
  \newtheorem{definition}[theorem]{Definition}
  
  \newtheorem{remark}[theorem]{Remark}
  \newtheorem{remarks}[theorem]{Remarks}

%%%%%%%%% New Commands %%%%%%%%%%%%%%%%%%%%%%%

\newcommand{\Q}{{{\mathbb{Q}}}}
\newcommand{\Z}{{{\mathbb{Z}}}}

\newcommand{\lk}{\operatorname{lk}}

\newcommand{\bbeta}{{\overline{\beta}}}
\newcommand{\bgamma}{{\overline{\gamma}}}
\newcommand{\bdelta}{{\overline{\delta}}}
\newcommand{\bzeta}{{\overline{\zeta}}}
\newcommand{\hatM}{{\widehat{M}}}

\newcommand{\kk}{i}

%%%%%%%%% Hyphenation %%%%%%%%%%%%%%%%%%%%%%%
\hyphenation{Homeo-mor-phism}
\addtolength{\textwidth}{1.5in}
\addtolength{\hoffset}{-0.75in}

% \graphicspath{{./}{figures/}}

\begin{document}
%%%%%%%Title&Abstract%%%%%%%%%%%%%%%%%%%%%%%%%%%%%%%

\title{Limits of sequences of pseudo-Anosov maps and of hyperbolic
3-manifolds}
\date{March 2020}
\author{Sylvain Bonnot}
\address{Departamento de Matem\'atica\\ IME-USP\\ Rua Do Mat\~ao
    1010\\ Cidade Universit\'aria\\ 05508-090 S\~ao Paulo SP\\ Brazil}
\email{sylvain@ime.usp.br}
\author{Andr\'e de Carvalho}
\address{Departamento de Matem\'atica Aplicada\\ IME-USP\\ Rua Do Mat\~ao
    1010\\ Cidade Universit\'aria\\ 05508-090 S\~ao Paulo SP\\ Brazil}
\email{andre@ime.usp.br}
\author{Juan Gonz\'alez-Meneses}
\address{Departamento de \'Algebra\\Instituto de Matem\'aticas (IMUS)\\Universidad de Sevilla\\Av. Reina Mercedes s/n\\41012 Sevilla\\Spain}
\email{meneses@us.es}
\author{Toby Hall}
\address{Department of Mathematical Sciences\\ University of Liverpool\\
    Liverpool L69 7ZL, UK}
\email{tobyhall@liverpool.ac.uk}

\begin{abstract}
  There are two objects naturally associated with a braid $\beta\in B_n$ of
  pseudo-Anosov type: a (relative) pseudo-Anosov homeomorphism
  $\varphi_\beta\colon S^2\to S^2$; and the finite volume complete hyperbolic
  structure on the 3-manifold $M_\beta$ obtained by excising the braid closure
  of~$\beta$, together with its braid axis, from~$S^3$. We show the disconnect
  between these objects, by exhibiting a family of braids
  $\{\beta_q:q\in\Q\cap(0,1/3]\}$ with the properties that: on the one hand,
  there is a fixed homeomorphism $\varphi_0\colon S^2\to S^2$ to which the
  (suitably normalized) homeomorphisms $\varphi_{\beta_{q}}$ converge as $q\to
  0$; while on the other hand, there are infinitely many distinct hyperbolic
  3-manifolds which arise as geometric limits of the form $\lim_{k\to\infty}
  M_{\beta_{q_k}}$, for sequences $q_k\to 0$.
\end{abstract}

\maketitle

\section{Introduction}
\label{sec:intro}

This article presents a somewhat surprising phenomenon on the interface between
the theories of surface homeomorphisms and of 3-manifold geometry. Two theorems
due to Thurston associate to certain mapping classes on a surface --- the
pseudo-Anosov mapping classes --- two different types of canonical objects.

\begin{itemize}
  \item The Classification Theorem for Surface
  Homeomorphisms~\cite{CTSH,FLP,CB} states that every irreducible mapping
  class which is not of finite order contains a pseudo-Anosov homeomorphism,
  which is unique up to topological conjugacy. Such a mapping class
  is said to be of pseudo-Anosov type.

  \item The Hyperbolization Theorem for Fibered
  3-Manifolds~\cite{HTF3M,Otal,McMullen} states that the mapping torus of a
  mapping class admits a complete hyperbolic metric of finite volume (unique up
  to isometry by the Mostow-Prasad Theorem) if and only if the mapping class is
  of pseudo-Anosov type.
\end{itemize}

In this paper we consider mapping classes of marked spheres,
represented by elements of Artin's braid groups: an $n$-braid $\beta\in B_n$
defines a mapping class on the $n$-marked disk, and hence on the
$(n+1)$-marked sphere. We say that~$\beta$ is of pseudo-Anosov type if and
only if the corresponding mapping class is, and in this case we can associate
to it:
\begin{itemize}
  \item a homeomorphism $\varphi_\beta\colon S^2\to S^2$, unique up to
  conjugacy, which is pseudo-Anosov relative to the marked points (that is,
  whose invariant foliations are permitted to have $1$-pronged singularities at
  these points); and

  \item the hyperbolic $3$-manifold\footnote{All of the $3$-manifolds in this paper are of the form $M_\beta$
  for some pseudo-Anosov braid~$\beta$, and we consider them equipped with
  their unique hyperbolic structures without further comment.} $M_\beta =
  S^3\setminus(\bbeta \cup A)$ --- where $\bbeta$ is the closure of $\beta$ and
  $A$ is its braid axis
  --- which is homeomorphic to the mapping torus of $\varphi_\beta$ (acting on
  the sphere punctured at the $n+1$ marked points).
\end{itemize}

We will present a family of pseudo-Anosov braids
$\{\beta_q:q\in\Q\cap(0,1/3]\}$, with $\beta_{m/n}\in B_{n+2}$, with the
following properties:
\begin{itemize}
  \item The pseudo-Anosov homeomorphisms $\varphi_q := \varphi_{\beta_q}:
  S^2\to S^2$ can be normalized in such a way that $\varphi_q\to\varphi_0$ as
  $q\to 0$, where $\varphi_0$ is a fixed sphere homeomorphism (the \emph{tight
  horseshoe map}, derived from Smale's horseshoe map).

  \item The hyperbolic $3$-manifolds $M_q:=M_{\beta_q}$ have the property that
  there are infinitely many distinct finite volume hyperbolic $3$-manifolds
  which can be obtained as geometric limits $\lim_{k\to\infty}M_{q_k}$ for some
  sequence $q_k\to 0$.
\end{itemize}

The braids $\beta_q$ are the \emph{NBT braids} of~\cite{HS}: they are
pseudo-Anosov braids for which the corresponding pseudo-Anosov homeomorphisms
$\varphi_q$ have particularly simple train tracks (see Remark~\ref{rmk:nbt}).
The fact that $\varphi_q\to\varphi_0$ as $q\to 0$ is a straightforward
consequence of results of~\cite{prime}: the main content of this paper is an
analysis of possible geometric limits of sequences $M_{q_k}$.

\medskip

It is interesting to contrast this work with the surprising discovery due to
Farb, Leininger, and Margalit~\cite{FLM} (see also~\cite{Agol}) of a universal
finiteness phenomenon for the mapping tori of small dilatation pseudo-Anosov
homeomorphisms: all such mapping tori can be obtained by Dehn surgery on a
finite collection of hyperbolic 3-manifolds. More precisely, given a constant
$P>0$, a pseudo-Anosov homeomorphism $\varphi\colon S\to S$ of a surface~$S$,
with dilatation~$\lambda$, is said to have small dilatation if $|\chi(S)|
\log\lambda \le P$. It follows from a result of Penner~\cite{Penner} that, for
sufficiently large~$P$, the set of small dilatation pseudo-Anosovs (as~$S$
ranges over all surfaces of negative Euler characteristic) is infinite.
Nevertheless, it is shown in~\cite{FLM} that, after puncturing at the
singularities of the invariant foliations of each pseudo-Anosov, there is only
a finite number of mapping tori associated with these maps.

Here, on the other hand, we consider sequences of pseudo-Anosov homeomorphisms
of the punctured sphere (punctured, in fact, exactly at the singularities of
the invariant foliations, although these include $1$-pronged singularities),
all of which converge to the same sphere homeomorphism, and show that the
corresponding sequences of mapping tori have infinitely many distinct geometric
limits. Since our sequences of pseudo-Anosovs have dilatations converging
to~$2$, and are defined on punctured spheres with unbounded Euler
characteristics, they do not have small dilatations.

\medskip

The principal technique used in the paper is Dehn surgery, and we now briefly
recap some key definitions and results, in order to fix conventions (which are
taken from section~9 of Rolfsen's book~\cite{rolfsen}). Let~$L =
L_1\cup\dots\cup L_n$ be a link in~$S^3$ with components~$L_i$, and let~$N$ be
a closed tubular neighborhood of~$L_1$ which is disjoint from the other
components of~$L$. Pick a basis $([\mu],[\lambda])$ for $H_1(\partial N, \Z)$
such that the `meridian' $\mu$ is contractible in~$N$ and the
`longitude'~$\lambda$ has linking number~$0$ with~$L_1$.
 
If~$J$ is a homotopically non-trivial simple closed curve in~$\partial N$, then
we can construct a $3$-manifold
\[
  M = \left(S^3 \setminus (L \cup \accentset{\circ}{N}) \right)\cup_h N,
\]
where $h\colon\partial N\to\partial N$ is a homeomorphism which takes $\mu$
onto~$J$. Writing $[J] = b[\mu] + a[\lambda] $, we say that $M$ is obtained
from $S^3\setminus L$ by \emph{Dehn filling $L_1$ with surgery coefficient
$r=b/a$}: this definition is independent of the choices of orientations of
$\mu$, $\lambda$ and~$J$. (This corresponds to Dehn filling coefficient $(b,a)$
in the notation used by SnapPy~\cite{SnapPy}, where the coefficients $(b,a)$
and $(-b,-a)$ lead to the same surgery. We will always assume that $a$ and $b$
are coprime.) We define the surgery coefficient~$r$ to be~$\infty$ if and only
if $[J]=\pm[\mu]$ (so that $b=1$ and $a=0$). In this case $M=S^3\setminus
(L_2\cup\dots\cup L_n)$: that is, filling~$L_1$ with surgery
coefficient~$\infty$ is the same as erasing the component~$L_1$ from the
link~$L$.
 
Suppose now that we have assigned surgery coefficients to some of the
components of~$L$, and that~$L_1$ is an unknotted component of~$L$. Applying a
positive meridional twist to the (solid torus) complement of a tubular
neighborhood of~$L_1$ is referred to as \emph{performing a $+1$ twist
on~$L_1$}: if~$D$ is a disk bounded by~$L_1$ which the other components of~$L$
intersect transversely, then the effect of this twist on the link~$L$ is to
replace each segment of~$L$ which intersects~$D$ with a helix which screws
through a collar of~$D$ in the right-handed sense. If $t\in\Z$, then
\emph{performing a $t$ twist on~$L_1$} means performing~$t$ such twists if
$t\ge0$, or $-t$ left-handed twists if $t<0$.
 
The revised link $L'$ after a $t$ twist on~$L_1$ describes the same
$3$-manifold as~$L$ provided that the surgery coefficients (on those components
of~$L$ which have them) are updated using the formul\ae:
\begin{equation}
\label{eq:update}
  \begin{aligned}
    r_1(L_1) &= \frac{1}{t + 1/r_0(L_1)}, \\
    r_1(L_i) &= r_0(L_i) + t(\lk(L_1, L_i))^2 \qquad(i>1),
  \end{aligned}
\end{equation}
where $r_0(L_i)$ and $r_1(L_i)$ are the surgery coefficients on~$L_i$ before
and after the twist, and $\lk(L_1, L_i)$ is the linking number of $L_1$ with
$L_i$.

In this paper, we will only perform twists in the case where~$L=\bbeta\cup A$
is the closure of a braid together with its axis; and we will only perform them
on either the braid axis~$A$ or a fixed component of $\bbeta$ (one which
corresponds to a single string of the braid). It will therefore be convenient
to describe the effects of such twists directly on the braid.
\begin{enumerate}[(a)]
  \item A $t$-twist on the braid axis~$A$ replaces $\beta$ with
  $\beta\theta^{-t}$, where~$\theta$ is the full twist in the braid group.

  \item Figure~\ref{fig:twist-1}~(a) is a schematic
  representation of~$\bbeta\cup A$, where $\beta$ has a fixed string which
  links one of the other strings. The effect of a $-1$ twist on the
  corresponding component of $\bbeta$ is shown in (b), which is followed by
  conjugacy (exchanging the red and the first black string) to obtain the braid
  of (c). Because this braid has the same structure as~$\beta$, the process can
  be repeated $t-1$ more times to obtain the braid of (d), which is the effect
  of applying a $-t$ twist on the fixed string. It has $t$ more strands
  than~$\beta$.

  We shall also consider twists on fixed strings which link a ribbon of other
  parallel strings of the braid. Figure~\ref{fig:twist-2} shows the effect of a
  $-t$ twist in this case, determined analogously. If the ribbon consists
  of~$m$ strings, then this increases the number of strings of~$\beta$ by $tm$.

  In order to carry out a $+t$ twist on a fixed string, we will
  conjugate~$\beta$ to take the form of the right hand side of
  Figure~\ref{fig:twist-2}. The $+t$ twist will then reduce it to the braid on
  the left hand side.
\end{enumerate}

\begin{figure}[htbp]
  \begin{center}
    \includegraphics[width=16cm]{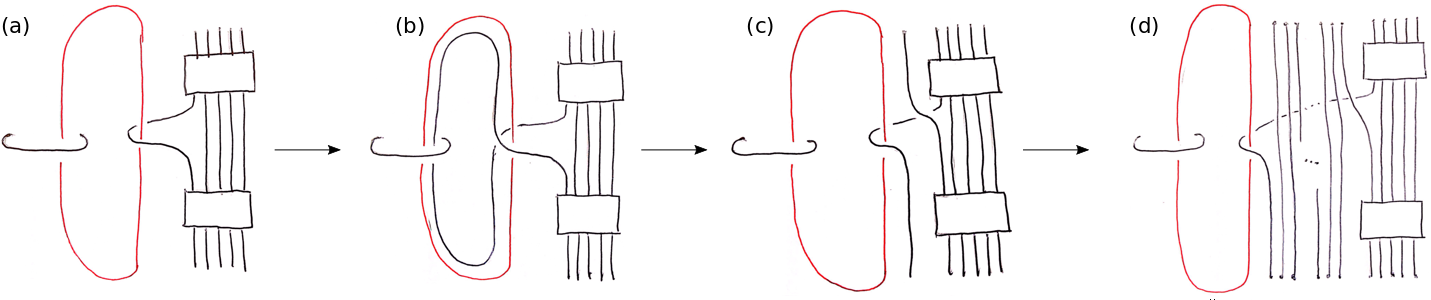}
    \caption{$-1$ and $-t$ twists on a fixed string which links one other string.}
    \label{fig:twist-1}
  \end{center}
\end{figure}

\begin{figure}[htbp]
  \begin{center}
    \includegraphics[width=8cm]{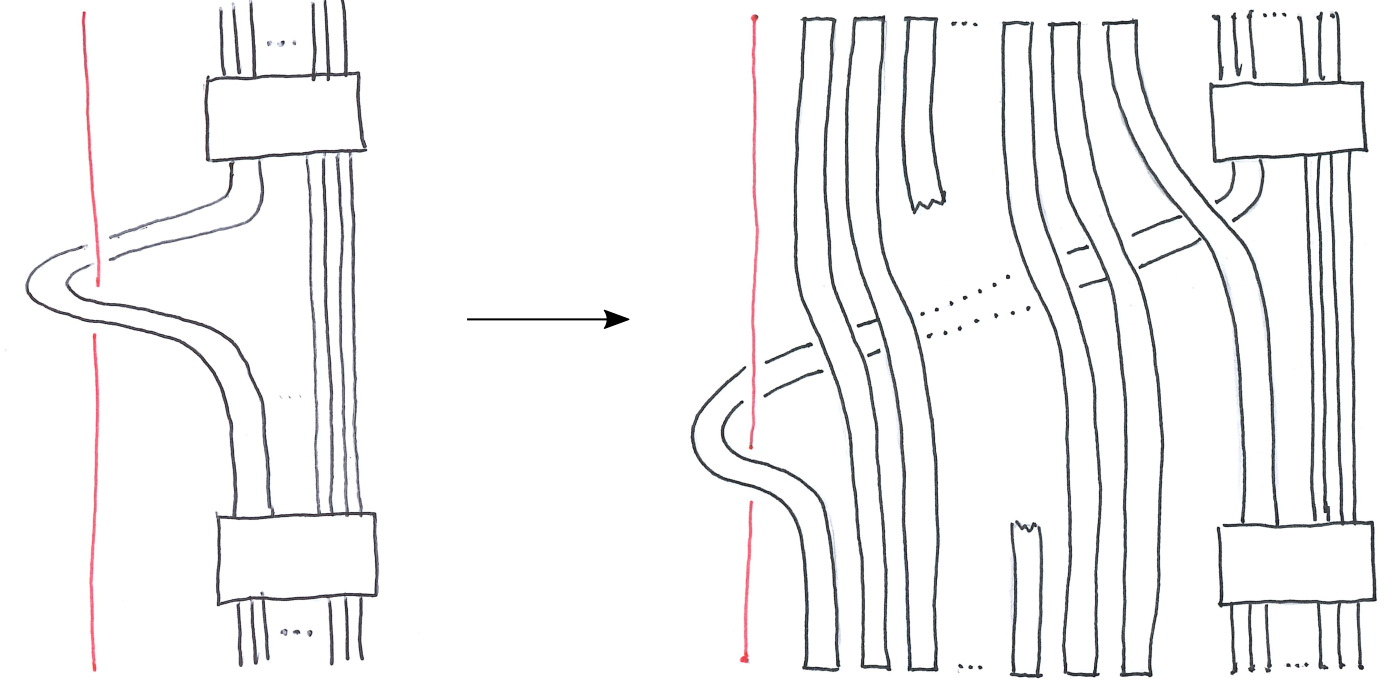}
    \caption{A $-t$ twist on a fixed string which links a ribbon of parallel
    strings.}
    \label{fig:twist-2}
  \end{center}
\end{figure}
 
We will use the following simplified version of Thurston's Hyperbolic Dehn
Surgery Theorem, which follows from Chapters~4 and~5 of~\cite{GT3M}, see
also~\cite{BP,NZ}. 

\begin{theorem}
\label{thm:HDST}
  Let $L = L_1\cup \dots \cup L_n$ be a link in $S^3$ such that $M:=S^3
  \setminus L$ is a complete hyperbolic 3-manifold of finite volume,
  $r_i=b_i/a_i$ be a sequence of rationals with $a_i^2+b_i^2\to\infty$, and
  $M_i$ be the sequence of 3-manifolds obtained by Dehn filling $L_1$ with
  surgery coefficients $r_i$. Then $M_i$ converges geometrically to $M$, and
  the convergence is non-trivial in the sense that $M_i$  and $M$ are distinct
  for all~$i$, so that there are infinitely many distinct 3-manifolds~$M_i$.
\end{theorem}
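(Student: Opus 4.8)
The plan is to deduce the statement from Thurston's Hyperbolic Dehn Surgery Theorem, which supplies all of the geometry, and then add a short, elementary argument for the non-triviality clause. The only genuine content lies in invoking Thurston's theorem; everything else is routine.

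For the geometric convergence, I would first note that the hypothesis $a_i^2+b_i^2\to\infty$ says precisely that the surgery slopes $[J_i]=b_i[\mu]+a_i[\lambda]$ leave every compact subset of the slope space of the cusp corresponding to~$L_1$. The Hyperbolic Dehn Surgery Theorem (Chapters~4 and~5 of~\cite{GT3M}; see also~\cite{NZ,BP}) then yields: all but finitely many of the~$M_i$ carry a complete finite-volume hyperbolic structure; for each such~$i$, the core curve of the solid torus filling the $L_1$-cusp is a closed geodesic of~$M_i$ whose length tends to~$0$; the hyperbolic manifolds~$M_i$ converge geometrically (in the pointed bi-Lipschitz / Gromov--Hausdorff sense) to~$M$; and $\operatorname{vol}(M_i)\to\operatorname{vol}(M)$, with $\operatorname{vol}(M_i)<\operatorname{vol}(M)$ whenever $M_i$ is hyperbolic (volume strictly decreases under Dehn filling). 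These are the facts I would quote verbatim; the finitely many exceptional (non-hyperbolic) surgeries are simply discarded, which is harmless since only finitely many occur — this small bookkeeping point is the only thing in the argument that needs a moment's care.

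It remains to see the convergence is non-trivial. Distinctness of each~$M_i$ from~$M$ is purely topological: $M=S^3\setminus L$ has exactly~$n$ ends, one toral cusp for each component~$L_j$, whereas Dehn filling~$L_1$ either glues a solid torus into the $L_1$-cusp (finite surgery coefficient) or erases~$L_1$ (coefficient~$\infty$); in both cases $M_i$ has exactly $n-1$ ends if $n\ge 2$, and is closed if $n=1$. So $M_i\not\cong M$, and in particular $M_i$ is not isometric to~$M$. To conclude that infinitely many distinct~$M_i$ arise, suppose not; then, restricting to the cofinite set of~$i$ for which~$M_i$ is hyperbolic and using Mostow--Prasad rigidity (homeomorphic finite-volume hyperbolic 3-manifolds are isometric, so have equal volume), the set $\{\operatorname{vol}(M_i)\}$ would be finite. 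But a sequence of real numbers taking only finitely many values and converging to~$\operatorname{vol}(M)$ is eventually equal to~$\operatorname{vol}(M)$, contradicting $\operatorname{vol}(M_i)<\operatorname{vol}(M)$. Hence the~$M_i$ realize infinitely many homeomorphism types. (One could equally run this last step using that the systole of~$M_i$ is at most the length of the core geodesic and hence tends to~$0$, so the~$M_i$ cannot fall into finitely many isometry classes.) The sole obstacle, then, is not in this proof at all but in the cited theorem itself; here it is used as a black box.
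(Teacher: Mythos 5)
Your proposal is correct and takes essentially the same route as the paper: the paper gives no proof of this statement at all, presenting it as a black-box consequence of Thurston's Hyperbolic Dehn Surgery Theorem (Chapters~4 and~5 of~\cite{GT3M}, see also~\cite{BP,NZ}), exactly the sources you cite. You go somewhat further than the paper by spelling out the non-triviality clause — the cusp-count argument for $M_i\not\cong M$ and the volume/Mostow--Prasad (or shrinking-systole) argument for infinitely many distinct $M_i$ — which is a correct and welcome elaboration of what the paper leaves implicit.
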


\section{The braids $\beta_q$}
\label{sec:beta_q}
Recall that the \emph{positive permutation braid}~$\beta\in B_n$ defined by a
permutation $\pi\in S_n$ is the unique $n$-braid which induces the
permutation~$\pi$ on its strings, and which has the properties that every pair
of strings crosses at most once, and that every crossing is in the positive
sense (we adopt the convention, following Birman~\cite{birman}, that a braid
crossing is positive if the left string crosses over the right one). Thus a
diagram of $\beta$ can be constructed by drawing the $1^\text{st}$ to the
$n^\text{th}$ strings in order, with the $i^\text{th}$ string going from
position~$i$ to position~$\pi(i)$ and passing underneath any intervening
strings which have already been drawn.

The following definition is from theorem~2.1 of~\cite{HS}, and the fact that
the braids defined are of pseudo-Anosov type is contained in the proof of
theorem~2.3 of the same paper. (There the braids~$\beta'_q$ are also defined
for $q\in(1/3, 1/2)$, but this is done in a different way and, since we are
only interested in limits as $q\to 0$, is not relevant here.) Here and
throughout the paper, when we write a positive rational number as $m/n$, we
will always assume that $m$ and $n$ are coprime and positive.
\begin{definition}[The braids $\beta'_q$]
\label{def:beta'}
  Let $q = m/n\in\Q\cap(0,1/3]$. The braid $\beta'_q\in B_{n+2}$ is the
  positive permutation braid (see Figure~\ref{fig:nbt}) defined by the cyclic
  permutation
  \begin{equation}
  \label{eqn:pi_q}
    \pi_q(r) = 
    \begin{cases}
      r + m         & \text{ if }1\le r\le n-3m+1,\\
      r + m + 1     & \text{ if }n-3m+2 \le r \le n-2m+1,\\
      2n - 2m + 4-r & \text{ if }n-2m+2 \le r \le n-m+1, \\
      n - 2m + 2   & \text{ if }r = n-m+2,\\
      n + 3 - r     & \text{ if }n-m+3 \le r \le n+2.
    \end{cases}
  \end{equation}
\end{definition}

It is helpful to organize the strings of $\beta'_q$ in \emph{ribbons} of
parallel strings: the~5 cases of~\eqref{eqn:pi_q} yield, in order:
\begin{itemize}
  \item A ribbon of width~$n-3m+1$ which moves~$m$ places to the right.

  \item A ribbon of width~$m$ which moves $m+1$ places to the right, thus leaving the target in position $n-2m+2$ unassigned.

  \item A ribbon of width~$m$ which is sent to the final~$m$ target positions with a half twist.

  \item A `rogue' string, which ends at the unassigned target in position
  $n-2m+2$.

  \item A ribbon of width~$m$, which is sent to the first~$m$ target positions with a half twist.
\end{itemize}

\begin{figure}[htbp]
  \begin{center}
    \includegraphics[width=7cm]{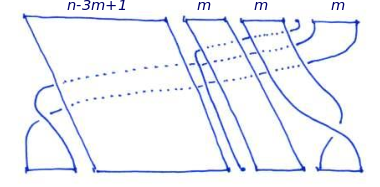}
    \caption{The braid $\beta'_{m/n}\in B_{n+2}$.}
    \label{fig:nbt}
  \end{center}
\end{figure}

\begin{definition}[The braids~$\beta_q$]
  It will be convenient for us to conjugate the braids $\beta'_q$ by a half
  twist of the final~$m$ strings, thereby turning the half twist on the final
  ribbon into a full twist, and removing the half twist on the penultimate
  ribbon: these conjugated braids will be denoted~$\beta_q$
  (Figure~\ref{fig:beta}). (The braids~$\beta_q$ can be seen as circular
  braids, as shown on the right of the figure, with each string other than the
  rogue one rotating around the circle by either~$m$ or~$m+1$ positions. This
  point of view motivates constructions later in the paper --- see
  Definitions~\ref{defn:gamma} and~\ref{defn:delta}.)
\end{definition}

\begin{figure}[htbp]
  \begin{center}
    \includegraphics[width=14cm]{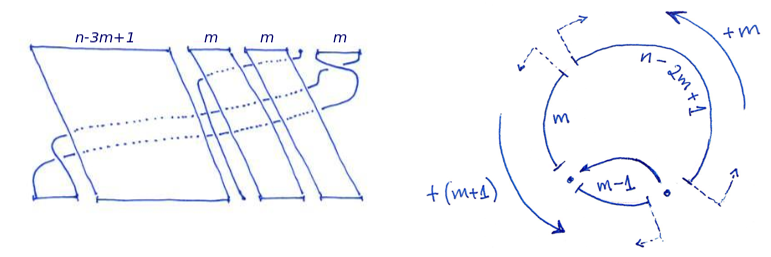}
    \caption{The braid $\beta_{m/n}\in B_{n+2}$, and as a circular braid.}
    \label{fig:beta}
  \end{center}
\end{figure}

\section{Pseudo-Anosov convergence to the tight horseshoe}
\label{sec:pA_converge}
The tight horseshoe map~\cite{extensions} $\varphi_0\colon S\to S$ is a
2-sphere homeomorphism which can be obtained by collapsing the horizontal and
vertical gaps in the invariant Cantor set of Smale's horseshoe
map~\cite{smale}. In order to define it directly, we start with its sphere~$S$
of definition, which is obtained by making identifications along the sides of a
unit square~$\Sigma$ as depicted in Figure~\ref{fig:tight-hs}. Infinitely many
segments along the boundary of~$\Sigma$, two of length $1/2^i$ for each $i\ge
0$, are folded in half (so that the points of each segment, other than the
center point, are identified in pairs). The top and right edges of~$\Sigma$ are
each a single folded segment, and the other segments are arranged on the left
and bottom sides in decreasing order of length from the top left and bottom
right vertices respectively. The fold segment endpoints, together with the
bottom left corner, are identified to a single point~$\infty$. It can be shown
(see for example~\cite{paper}) that the space~$S$ so obtained is a topological
sphere (and, in fact, that the Euclidean structure on~$\Sigma$ induces a well
defined conformal structure on~$S$).

\begin{figure}[htbp]
  \begin{center}
    \includegraphics[width=5cm]{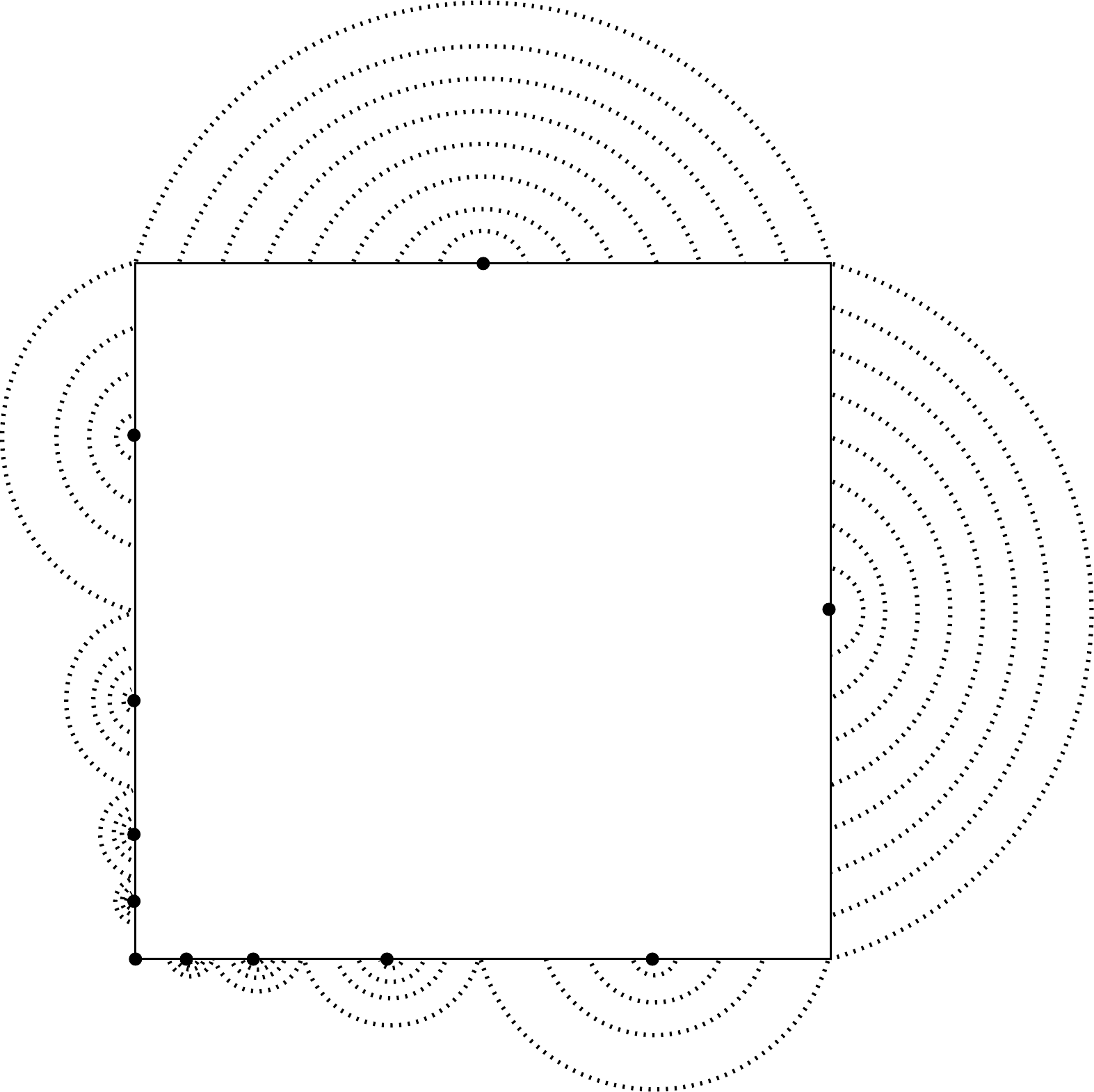}
    \caption{The sphere~$S$ of definition of the tight horseshoe map.}
    \label{fig:tight-hs}
  \end{center}
\end{figure}

To define the tight horseshoe map, let $F\colon\Sigma\to\Sigma$ be the
(discontinuous and non-injective) map defined by
\[
  F(x,y) = 
    \begin{cases}
      (2x,\, y/2)     & \text{ if }x \le 1/2,\\
      (2-2x,\, 1-y/2) & \text{ if } x > 1/2.
    \end{cases}
\]  
That is, $F$ stretches~$\Sigma$ by a factor 2 horizontally, contracts it by a
factor $1/2$ vertically, and maps its left half to its bottom half, and its
right half, with a flip, to its top half. The identifications on~$\Sigma$ are
precisely those needed to make~$F$ continuous and injective, so that it defines
a homeomorphism $\varphi_0\colon S\to S$, the tight horseshoe map. (It is an
example of a \emph{generalized pseudo-Anosov map}~\cite{gpa}: it has
(horizontal and vertical) unstable and stable invariant foliations, but these
foliations have infinitely many $1$-pronged singularities --- at the centers of
the fold segments --- accumulating on an `$\infty$-pronged singularity'
corresponding to the fold segment endpoints and the bottom left vertex.)

For each $q=m/n\in(0,1/3]\cap\Q$, let $\varphi_q\colon S^2\to S^2$ be a
pseudo-Anosov homeomorphism in the mapping class of the $(n+3)$-marked
sphere defined by $\beta_q$. The convergence of $\varphi_q$ to $\varphi_0$ as
$q\to 0$ is an immediate consequence of results from~\cite{prime}. The
following statement is a summary of the relevant parts of theorems~5.19
and~5.31 of that paper.

\begin{theorem}
\label{thm:prime}
  There is a continuously varying family $\{\chi_t\colon S^2\to S^2\}_{t\in (\sqrt2, 2]}$ of homeomorphisms of the standard 2-sphere, with the properties that:
  \begin{enumerate}[(a)]
    \item $\chi_2$ is topologically conjugate to $\varphi_0$; and
    \item there is a decreasing function $t\colon (0, 1/3]\cap\Q\to(\sqrt2,
    2)$, satisfying $t(q)\to 2$ as $q\to 0$, such that $\chi_{t(q)}$ is
    topologically conjugate to $\varphi_q$ for each~$q$.
  \end{enumerate}
\end{theorem}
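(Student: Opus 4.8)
The plan is to deduce the statement from Theorems~5.19 and~5.31 of~\cite{prime}, of which it is essentially a repackaging, so that the work is one of identifying objects rather than of construction. I would organize it in three steps: quote the family and its continuity (including that $\chi_2\simeq\varphi_0$); locate each~$\varphi_q$ among the~$\chi_t$; and then confront the matching problem on which that location rests, which I expect to be the main obstacle.

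\emph{Step~1.} Theorem~5.19 of~\cite{prime} constructs, from a single continuously varying combinatorial model --- a family of invariant train tracks together with their transition data --- a family of sphere homeomorphisms (in fact generalized pseudo-Anosov maps in the sense of~\cite{gpa}), shows that it varies continuously in the appropriate sense (uniform convergence on~$S^2$, once the underlying sphere structures are normalized compatibly), and identifies its endpoint member with the tight horseshoe~$\varphi_0$. Reparametrizing the family by $t\in(\sqrt2,2]$ so that $t=2$ is the endpoint produces the family $\{\chi_t\}$ and gives part~(a) outright.

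\emph{Step~2.} This is the content of~\cite[Theorem~5.31]{prime}: for $q=m/n\in(0,1/3]\cap\Q$, the pseudo-Anosov~$\varphi_q$ --- which is pseudo-Anosov relative to the $n+3$ marked points of the sphere --- is topologically conjugate to~$\chi_{t(q)}$ for an explicit decreasing function $q\mapsto t(q)\in(\sqrt2,2)$ satisfying $t(q)\to2$ as $q\to0$. (The monotonicity and the limit can also be seen directly from a description of~$t(q)$ in terms of~$m$ and~$n$ --- for instance, $t(q)$ is the dilatation of~$\varphi_q$, the largest root of the characteristic polynomial of the transition matrix of a train track for~$\beta_q$ --- together with the observation, made in the introduction, that the dilatations of the~$\varphi_q$ decrease to~$2$.) Combining Steps~1 and~2 gives part~(b).

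\emph{Step~3.} The point demanding genuine care --- and the crux, as it already is in~\cite{prime} --- is the matching in Step~2 of two \emph{a priori} unrelated descriptions of the same map: $\varphi_q$ is defined abstractly, via the Classification Theorem applied to~$\beta_q$, whereas $\chi_{t(q)}$ comes from the explicit model of Step~1. Since a pseudo-Anosov class relative to marked points is determined up to conjugacy by an invariant train track together with its transition matrix and the combinatorial type of the induced graph map, this amounts to comparing the model's train track at parameter~$t(q)$ with a train track carrying~$\varphi_{\beta_q}$ --- for example the particularly simple one of~\cite{HS} recalled in Remark~\ref{rmk:nbt} --- and checking in particular that the singularity data agree, including the $n+3$ one-pronged singularities at the marked points. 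Granting this comparison, the continuity from Step~1 and the endpoint value $\chi_2\simeq\varphi_0$ complete the argument; it is worth noting that the convergence $\chi_{t(q)}\to\chi_2$ is necessarily of a degenerate type, since the numbers of marked points and of prongs of~$\chi_{t(q)}$ must grow without bound to produce the $\infty$-pronged singularity of~$\varphi_0$ in the limit.
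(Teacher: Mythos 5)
The paper itself gives no proof of this statement, presenting it explicitly as ``a summary of the relevant parts of theorems~5.19 and~5.31 of~\cite{prime}'', which is exactly the attribution on which your three-step sketch rests, so your proposal is consistent with the paper's treatment. One small slip in your Step~2 parenthetical: since $t$ is a \emph{decreasing} function of $q$ with $t(q)\to2$ as $q\to0$, the dilatations $t(q)$ \emph{increase} to $2$ as $q\to0$, not decrease.
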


\begin{remark}
\label{rmk:nbt}
  A brief discussion of the ideas surrounding Theorem~\ref{thm:prime} may be
  helpful to the reader. Boyland~\cite{Boy} defined the \emph{braid type} of a
  period~$n$ orbit~$P$ of an orientation-preserving disk homeomorphism $f\colon
  D^2\to D^2$ to be the isotopy class of $f\colon D^2\setminus P\to
  D^2\setminus P$, up to conjugacy in the mapping class group of the
  $n$-punctured disk: the braid type can therefore be described --- although
  not uniquely --- by a braid $\beta_P\in B_n$. He further defined the
  \emph{forcing relation}, a partial order on the set of braid types: one braid
  type forces another if every homeomorphism which has a periodic orbit of the
  former braid type also has one of the latter. The forcing relation therefore
  describes constraints on the order in which periodic orbits can appear in
  parameterized families of homeomorphisms.

  If $f$ is Smale's horseshoe map, then standard symbolic techniques associate
  a \emph{code} $c_P\in\{0,1\}^n$ to a period~$n$ orbit~$P$. This coding
  establishes a correspondence between the non-trivial periodic orbits of~$f$
  and those of the affine unimodal \emph{tent maps} $T_t\colon[0,1]\to[0,1]$
  defined for $t\in(1,2]$ by
  \[
    T_t(x) = \min(2+t(x-1), t(1-x)),
  \]     
  whose periodic orbits are likewise coded in a standard way. The
  braids~$\beta'_q$ of Definition~\ref{def:beta'} --- or, more accurately, the
  braids $\beta'_q$ for $q\in(0,1/2)\cap\mathbb{Q}$ alluded to before the
  definition --- are precisely the pseudo-Anosov braids describing braid types
  of horseshoe periodic orbits~$P_q$ which are \emph{quasi-one-dimensional}, in
  the sense that the braid types that they force are exactly those
  corresponding to the periodic orbits of the tent map $T_{t(q)}$ which has
  kneading sequence $c_{P_q}^\infty$~\cite{HS}.
      
  Another way to view the braids $\beta_{q}'$ is as the braids of horseshoe
  periodic orbits~$P_q$ whose mapping class is pseudo-Anosov and whose
  associated train tracks are the simplest possible: if the $1$-gons about the
  orbit points are ignored, then the union of the remaining edges is an arc.
  This means that the only singularities of the invariant foliations of
  $\varphi_q$ are 1-prongs at points of the orbit and an $n$-prong at $\infty$,
  where $q=m/n$. This is what makes the orbits $P_q$ quasi-one-dimensional: the
  induced map on the reduced train track (which is an interval) is a unimodal
  interval map.

  One way to construct the pseudo-Anosov map in a mapping class is as a factor
  of the natural extension of a corresponding train track map. In~\cite{prime},
  a similar method is used to construct a \emph{measurable pseudo-Anosov}
  homeomorphism from the natural extension of each tent map $T_t$ with
  $t>\sqrt{2}$: these form the continuously varying family $\chi_t$ of
  Theorem~\ref{thm:prime}. They are pseudo-Anosov maps if and only if the
  kneading sequence of the tent map is periodic and is the horseshoe code of
  one of the braids $\beta_{q}'$, i.e., if and only if  $t=t(q)$ for some $q\in
  (0,1/2)\cap\mathbb{Q}$, and in this case $\chi_{t(q)}$ is topologically
  conjugate to $\varphi_q$.

  Theorem~\ref{thm:prime} also provides limits of the pseudo-Anosov
  homeomorphisms $\varphi_q$ as $q$ tends to an irrational~$\xi$, or to a
  rational~$r$ either from above or from below (the image of~$t$ is discrete).
  All such limits are generalized pseudo-Anosov homeomorphisms.
\end{remark}

\section{Convergence of mapping tori}
\label{sec:hyp_converge}

Let $\nu=\ell/m\in [0,1)\cap\Q$, and consider the corresponding sequence
$(q^{(\nu)}_k)_{k\ge 3}$ of rationals defined by $q^{(\nu)}_k =
\frac{m}{km+\ell}$. By the description of the ribbon structure of the braids
$\beta_q$ in Section~\ref{sec:beta_q}, the braid~$\beta_{q^{(\nu)}_{k}}$ is as
depicted in Figure~\ref{fig:beta}, with the first ribbon having width
$(k-3)m+\ell+1$ and the others having width~$m$.

In this section we will show that, for each~$\nu$, the mapping tori
$M_{q^{(\nu)}_k}$ converge geometrically as $k\to\infty$ to a hyperbolic
manifold $\hatM_\nu$ of finite volume. In the following section, we will prove
that the set $\{\hatM_\nu\colon \nu\in[0,1)\cap\Q\}$ is infinite.

The crucial observation is that the sequence of mapping tori $M_{q^{(\nu)}_k}$
can be obtained from a single finite-volume hyperbolic 3-manifold $\hatM_\nu$
by Dehn filling one of its cusps with a sequence of distinct surgery
coefficients $r_k$: it therefore follows from Theorem~\ref{thm:HDST} that the
sequence of mapping tori converges geometrically to $\hatM_\nu$.

The manifolds $\hatM_\nu$ are themselves mapping tori, corresponding to braids
$\gamma_\nu$ which are obtained from $\beta_{q^{(\nu)}_3} =
\beta_{m/(3m+\ell)}$ by adding one additional string on the left. This
additional string is chosen precisely in order that $\hatM_\nu$ is the
geometric limit of the sequence $M_{q^{(\nu)}_k}$ (see the proof of
Theorem~\ref{thm:fill-Mnu}).

\begin{definition}[The braids~$\gamma_\nu$] 
\label{defn:gamma}
  Let $\nu = \ell/m\in[0,1) \cap \Q$.
  The braid $\gamma_\nu\in B_{3m+\ell+3}$ is obtained from
  $\beta_{m/(3m+\ell)}$ by adding a fixed string on the left, which links with
  the final width~$m$ ribbon of $\beta_{m/(3m+\ell)}$ but not with the other
  strings, as depicted in Figure~\ref{fig:gamma}. (In the circular
  representation of Figure~\ref{fig:beta}, this corresponds to adding a fixed
  string, not linking the rogue string, through the center of the circle.)
\end{definition}

\begin{figure}[htbp]
  \begin{center}
    \includegraphics[width=7cm]{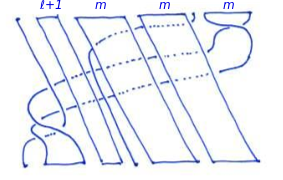}
    \caption{The braid $\gamma_{\ell/m}$.}
    \label{fig:gamma}
  \end{center}
\end{figure}

That $\gamma_\nu$ is a pseudo-Anosov braid follows from the fact that
$\beta_{m/(3m+\ell)}$ is. (Any reducing curve~$C$ would bound a disk~$D$
containing at least two but not all of the punctures associated with the
strings of~$\gamma_\nu$. $D$ cannot contain the puncture associated to the
fixed string, since then its image would also contain that puncture but a
different set of the other punctures; it cannot contain a proper subset of the
other punctures, since then $\beta_{m/(3m+\ell)}$ would be reducible; and it
cannot contain all of the other punctures since the associated strings link
with the fixed string.) Therefore $\hatM_\nu := S^3\setminus(\bgamma_\nu
\cup A)$ (where~$A$ is the braid axis) is a finite volume hyperbolic
$3$-manifold with~$3$ cusps.

\begin{theorem} 
\label{thm:fill-Mnu}
  Let $\nu\in[0,1)\cap\Q$ and $k\ge 1$. Dehn filling the cusp of
  $\hatM_\nu$ corresponding to the fixed string of~$\gamma_\nu$ with surgery
  coefficient~$1/k$ yields $M_{q^{(\nu)}_{k+3}}$.
\end{theorem}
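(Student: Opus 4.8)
The plan is to realise the stated Dehn filling as a single twist on the unknotted component of $\bar\gamma_\nu$ coming from the fixed string of~$\gamma_\nu$, using the twist machinery recalled in Section~\ref{sec:intro}, and then to identify the resulting braid with $\beta_{q^{(\nu)}_{k+3}}$ by inspecting its ribbon structure.

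First I would observe that the fixed string of~$\gamma_\nu$ is a single string of the braid, so its closure is an unknotted component~$L_1$ of the link $\bar\gamma_\nu\cup A$, bounding a disk~$D$ which (by Definition~\ref{defn:gamma}) is met transversely by exactly the $m$ strings of the final width-$m$ ribbon of $\beta_{m/(3m+\ell)}$, and by no other string of~$\bar\gamma_\nu$. Hence the description of twists on a fixed string in Section~\ref{sec:intro} (see Figure~\ref{fig:twist-2}), together with the coefficient-update formul\ae~\eqref{eq:update}, applies to~$L_1$. In the manifold to be filled, $L_1$ carries surgery coefficient $r_0(L_1)=1/k$, while the braid axis~$A$ and the remaining components of~$\bar\gamma_\nu$ carry no surgery coefficients. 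By the first formula of~\eqref{eq:update} with $t=-k$, a $-k$ twist on~$L_1$ changes its coefficient to $\frac{1}{-k+1/r_0(L_1)}=\frac{1}{-k+k}=\infty$, while by the second formula the (absent, i.e.\ ``$\infty$'') coefficients of the other components are unaffected. Thus, after this twist, filling~$L_1$ amounts to erasing it, and the filled manifold is presented by the link obtained from $\bar\gamma_\nu\cup A$ by performing a $-k$ twist on~$L_1$ and then deleting~$L_1$, with no surgery coefficients remaining.

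It then remains to compute this braid. By Figure~\ref{fig:twist-2} with $t=k$, the $-k$ twist screws the $m$ strings of the final width-$m$ ribbon $k$ times through a collar of~$D$, introducing $km$ new strings. This is cleanest in the circular representation of Figure~\ref{fig:beta}: there~$L_1$ is an axis through the centre of the circle, linked only by the final width-$m$ ribbon, and the $-k$ twist simply increases by~$k$ the number of times those~$m$ strings wind around the circle; in the linear braid picture this appears as a width-$km$ lengthening of the first ribbon, with the other four blocks of~\eqref{eqn:pi_q} (the second and third width-$m$ ribbons, the rogue string, and the final width-$m$ ribbon with its full twist) carried along unchanged. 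Since $\beta_{m/(3m+\ell)}$ has first ribbon of width $(3m+\ell)-3m+1=\ell+1$, the resulting braid is $\beta_{m/n'}$ with $n'-3m+1=\ell+1+km$, that is $n'=(k+3)m+\ell$, still carrying the fixed string~$L_1$ attached on the left; and $m/n'=\frac{m}{(k+3)m+\ell}=q^{(\nu)}_{k+3}$. Deleting~$L_1$ then leaves the link $\bar\beta_{q^{(\nu)}_{k+3}}\cup A$, so the filled manifold is $M_{q^{(\nu)}_{k+3}}$, as claimed.

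The main obstacle is the ribbon-level identification in the previous paragraph: verifying that the $km$ strings produced by the twist assemble \emph{exactly} into a width-$km$ lengthening of the first ribbon, reproducing~\eqref{eqn:pi_q} for $n=(k+3)m+\ell$ on the nose, rather than merely producing some braid whose closure-plus-axis happens to be isotopic to $\bar\beta_{q^{(\nu)}_{k+3}}\cup A$. The circular-braid viewpoint of Figure~\ref{fig:beta} makes this plausible --- it exhibits~$L_1$ as a second axis about which the final ribbon winds, so that ``twisting~$L_1$'' is literally ``incrementing the winding number'', i.e.\ incrementing~$k$ in the definition of $q^{(\nu)}_k$ --- but one still has to check that the four trailing blocks of~\eqref{eqn:pi_q} are transported unchanged, and that the chirality works out: a left-handed ($-k$) twist, as drawn in Figure~\ref{fig:twist-2}, is both the one that sends the coefficient $1/k$ to~$\infty$ and the one that yields $\beta_{q^{(\nu)}_{k+3}}$ with the sign conventions of Definition~\ref{def:beta'}.
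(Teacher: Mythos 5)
Your proof is correct and takes essentially the same route as the paper's: perform a $-k$ twist on the fixed string's component, use the update formul\ae~\eqref{eq:update} to send its coefficient $1/k$ to $\infty$, and observe via Figure~\ref{fig:twist-2} (or the circular picture) that the twist enlarges the first ribbon by $km$ strings, turning $\gamma_\nu$ minus the fixed string into $\beta_{m/((k+3)m+\ell)}$. The only difference is that you explicitly flag the ribbon-level identification as a point needing care, whereas the paper treats it as immediate from the figure; your circular-braid justification of that step is exactly the intended reading.
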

\begin{proof}
  It is immediate from Figure~\ref{fig:twist-2} that performing a $-k$ twist on
  the component~$R$ of $\bgamma_\nu\cup A$ corresponding to the fixed string
  increases the width of the first ribbon of $\gamma_\nu$ from $\ell+1$ to
  $km+\ell+1$. By~\eqref{eq:update}, this changes the surgery coefficient
  on~$R$ to $r_1(R) = 1/(-k + 1/(1/k)) = \infty$, so that it can be erased,
  yielding the closure of the braid $\beta_{m/((k+3)m + \ell)} =
  \beta_{q_{k+3}^{(\nu)}}$ (see Figure~\ref{fig:beta}). That is, Dehn
  filling~$R$ with surgery coefficient~$1/k$ yields $M_{q_{k+3}^{(\nu)}}$ as
  required.
\end{proof}

The following corollary is now immediate from Theorem~\ref{thm:HDST}.

\begin{corollary}
\label{cor:lim-Mnu}
  For each $\nu\in[0,1)\cap\Q$ the sequence $M_{q^{(\nu)}_k}$ converges
  geometrically to $\hatM_\nu$.
\end{corollary}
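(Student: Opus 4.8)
The plan is short, since the corollary follows directly from Theorem~\ref{thm:HDST}; the only work is to check that its hypotheses are met and to account for a shift of index. First I would recall that $\hatM_\nu = S^3\setminus(\bgamma_\nu\cup A)$ was already shown, immediately after Definition~\ref{defn:gamma}, to be a complete hyperbolic $3$-manifold of finite volume (it is the $3$-cusped mapping torus associated with the pseudo-Anosov braid $\gamma_\nu$). Accordingly I would set $L = \bgamma_\nu\cup A$ and take $L_1$ to be the component $R$ corresponding to the fixed string of $\gamma_\nu$, which places us in exactly the setting of Theorem~\ref{thm:HDST}, with $M = S^3\setminus L = \hatM_\nu$.

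Next, by Theorem~\ref{thm:fill-Mnu}, for every $k\ge 1$ the manifold $M_{q^{(\nu)}_{k+3}}$ is obtained from $\hatM_\nu$ by Dehn filling $R$ with surgery coefficient $r_k = 1/k$. Writing $r_k = b_k/a_k$ with $a_k, b_k$ coprime gives $b_k = 1$ and $a_k = k$, so that $a_k^2 + b_k^2 = k^2 + 1 \to \infty$ as $k\to\infty$. Theorem~\ref{thm:HDST} therefore applies and yields that the sequence $(M_{q^{(\nu)}_{k+3}})_{k\ge 1}$ converges geometrically to $\hatM_\nu$ (and, incidentally, that its terms are pairwise distinct and distinct from $\hatM_\nu$, although that is not needed here).

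Finally, $(M_{q^{(\nu)}_{k+3}})_{k\ge 1}$ is simply the tail $(M_{q^{(\nu)}_k})_{k\ge 4}$ of the sequence $(M_{q^{(\nu)}_k})_{k\ge 3}$, and geometric convergence is unaffected by deleting finitely many terms and reindexing; hence $(M_{q^{(\nu)}_k})_{k\ge 3}$ also converges geometrically to $\hatM_\nu$, which is the assertion of the corollary.

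There is no genuine obstacle in this step: all of the substance is contained in Theorem~\ref{thm:fill-Mnu} (the identification of each $M_{q^{(\nu)}_{k+3}}$ as a Dehn filling of one fixed manifold) and in the earlier verification that $\hatM_\nu$ is hyperbolic of finite volume. The present argument merely records that the surgery coefficients $1/k$ blow up in the sense required, so that Thurston's Hyperbolic Dehn Surgery Theorem, in the form of Theorem~\ref{thm:HDST}, closes the argument.
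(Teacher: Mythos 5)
Your proposal is correct and follows exactly the route the paper takes: Theorem~\ref{thm:fill-Mnu} exhibits the manifolds $M_{q^{(\nu)}_{k+3}}$ as Dehn fillings of $\hatM_\nu$ with coefficients $1/k$, and Theorem~\ref{thm:HDST} then gives geometric convergence. The paper simply declares the corollary ``immediate''; you have merely spelled out the verification of the hypotheses and the index shift, which is fine.
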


\section{Infinitely many limit manifolds}
\label{sec:infinitely_many}

Figure~\ref{fig:limit_vols} is a plot of the volumes of the limit
manifolds~$\hatM_\nu$ against~$\nu$, generated by SnapPy~\cite{SnapPy}. The
points in red are those for which~$\nu$ is of the form $\kk/(\kk+1)$. In this
section we show how all of the corresponding manifolds $\hatM_{\kk/(\kk+1)}$
can be obtained by Dehn filling a cusp of another hyperbolic $3$-manifold~$M$
with a sequence of distinct surgery coefficients, so that, again by
Theorem~\ref{thm:HDST}, there are infinitely many distinct limit manifolds
$\hatM_{\kk/(\kk+1)}$ (which converge geometrically to~$M$ as $\kk\to\infty$).

\begin{figure}[htbp]
  \begin{center}
    \includegraphics[width=9cm]{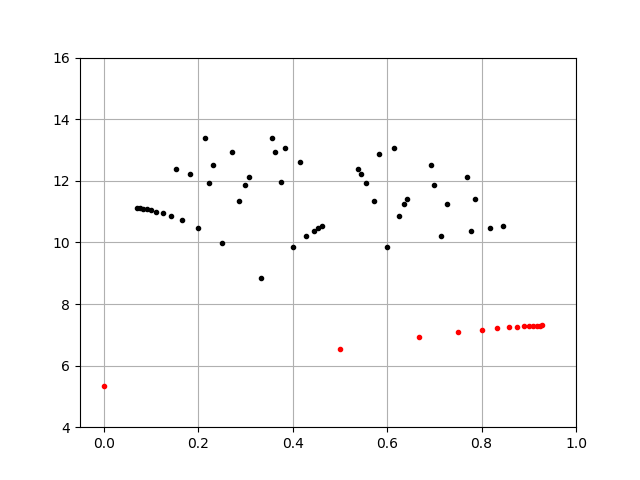}
    \caption{Volumes of the limit manifolds~$\hatM_\nu$ (for~$\nu$ with
    denominator $\le 14$).}
    \label{fig:limit_vols}
  \end{center}
\end{figure}

\medskip
\medskip
\medskip

\begin{remarks} \mbox{}
  \begin{enumerate}[(a)]
    \item Other apparently convergent sequences in Figure~\ref{fig:limit_vols}
    correspond to similar sequences $\nu_\kk$, such as $\nu_\kk=1/\kk$ and
    $\nu_\kk=\kk/(2\kk+1)$.

    \item The volume $5.333489\ldots$ of $\hatM_0$ suggests that it may be the
    magic manifold. To see that this is indeed the case, consider the braids
     depicted in Figure~\ref{fig:magic}, each representing the 3-manifold
     obtained by removing the braid closure together with its axis from~$S^3$.
     The braid on the left is $\gamma_{0/1}$, representing $\hatM_0$, while the
     one on the right represents the magic manifold (see for example Figure~3
     of~\cite{KT}). The operations converting each braid to the next are either
     twists on components of the associated links or braid conjugacies, and
     therefore leave the 3-manifolds unchanged. Specifically, these operations
     are, in order: conjugacy by $\sigma_4^{-1}$; a $+3$ twist on the red
     component (see Figure~\ref{fig:twist-1}~(d) and~(a)); conjugacy by
     $\sigma_2$; a $+1$ twist on the braid axis; and conjugacy by
     $\sigma_1\sigma_2$.
  \end{enumerate}

  \begin{figure}[htbp]
    \begin{center}
      \includegraphics[width=15cm]{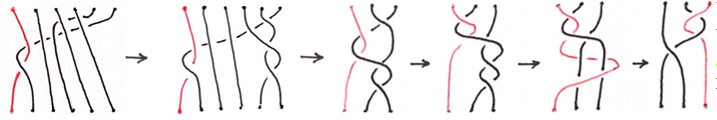}
      \caption{$\hatM_0$ is the magic manifold.}
      \label{fig:magic}
    \end{center}
  \end{figure}

\end{remarks}

The manifold~$M$ is obtained from the $10$-braid~$\delta$ of the following
definition (see Figure~\ref{fig:delta}), whose closure $\bdelta$ is a
three-component link. Note that the blue and green strings in the figure form a
braid conjugate to $\gamma_{0/1}$ (the conjugacy
$\sigma_1^{-1}\sigma_2^{-1}\sigma_3^{-1}\sigma_4^{-1}\sigma_5^{-1}$ moves the
fixed string from the left to the right of the braid diagram), and to this
braid has been added a $4$-string braid which `shadows' the blue strings. It is
not obvious a priori --- at least, not to the authors --- that Dehn filling the
`black' cusp of the resulting hyperbolic $3$-manifold should yield the
manifolds $\hatM_{\kk/(\kk+1)}$: rather, the braid~$\delta$ was found
experimentally using SnapPy~\cite{SnapPy}.

\begin{definition}
\label{defn:delta}
  Let $\delta=
  \sigma_6\sigma_5\sigma_4\sigma_3\sigma_9\sigma_8\sigma_8\sigma_9\sigma_7\sigma_6\sigma_5\sigma_4\sigma_3\sigma_2\sigma_1\sigma_8\sigma_7\sigma_6\sigma_5\sigma_4\sigma_3\sigma_2\sigma_1\sigma_8\sigma_6\in
  B_{10}$.
\end{definition}

 It can be checked, using the implementation~\cite{Trains} of the
Bestvina-Handel algorithm for train tracks of surface homeomorphisms~\cite{BH}
that $\delta$ is pseudo-Anosov, with its train track and image train track as
shown in Figure~\ref{fig:train_track}. The corresponding relative pseudo-Anosov
homeomorphism therefore has 1-pronged singularities at the marked points
corresponding to the blue and green strings of Figure~\ref{fig:delta}, a
$4$-pronged singularity at~$\infty$, and regular points at the black marked
points. Therefore~$M:=S^3\setminus(\bdelta \cup A)$ (where~$A$ is the braid
axis) is a finite volume hyperbolic 3-manifold with 4~cusps.

\begin{figure}[htbp]
  \begin{center}
    \includegraphics[width=14cm]{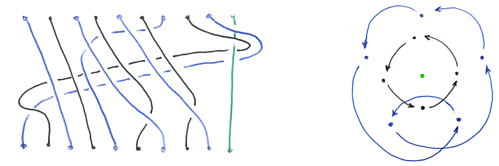}
    \caption{The braid $\delta\in B_{10}$.}
    \label{fig:delta}
  \end{center}
\end{figure}

\begin{figure}[htbp]
  \begin{center}
    \includegraphics[width=0.95\textwidth]{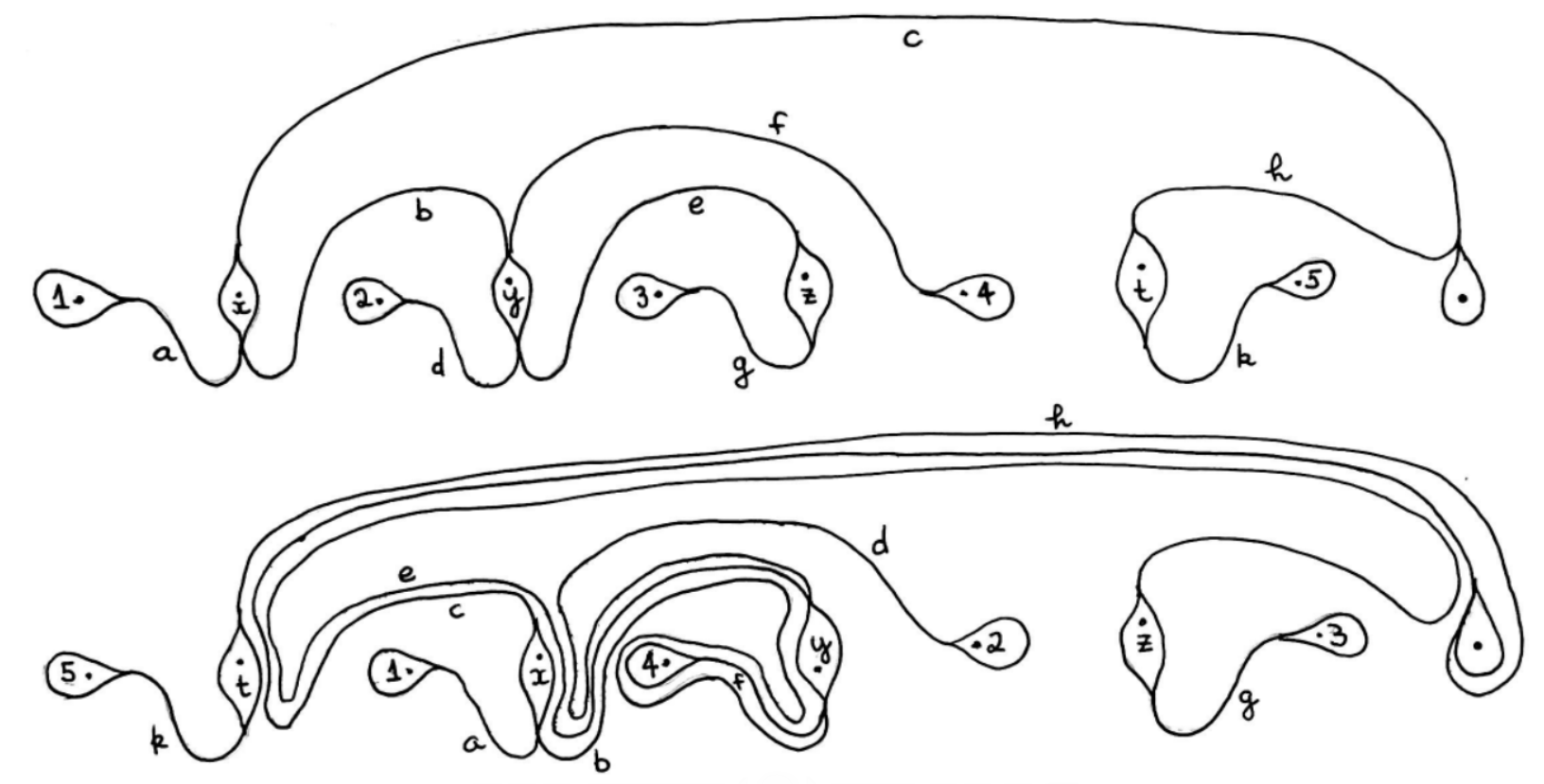}
    \caption{An invariant train track for $\delta$, and its image.}
    \label{fig:train_track}
  \end{center}
\end{figure}

\begin{theorem}
\label{thm:tdf}
  Let $\kk\ge 1$. Dehn filling the cusp of~$M$ corresponding to the black strings
  of Figure~\ref{fig:delta} with surgery coefficient $-4 + 1/\kk$ yields
  $\hatM_{\kk/(\kk+1)}$.
\end{theorem}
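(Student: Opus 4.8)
The plan is to prove this exactly as Theorem~\ref{thm:fill-Mnu} and the identification of $\hatM_0$ with the magic manifold were proved: by exhibiting an explicit chain of Rolfsen twists and braid conjugacies that converts the surgery presentation of the filled manifold into the defining presentation of $\hatM_{\kk/(\kk+1)}$.

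First I would record that Dehn filling the black cusp of $M=S^3\setminus(\bdelta\cup A)$ with coefficient $-4+1/\kk$ is presented by the link $\bdelta\cup A$ carrying the coefficient $-4+1/\kk$ on the black component $R$ and $\infty$ on the three other components. Since $R$ is unknotted and links a ribbon of blue strands --- it is a parallel push-off (a ``shadow'') of the blue strands of the $\gamma_{0/1}$ sub-braid --- every Rolfsen twist on $R$ is an admissible Kirby move: by~\eqref{eq:update} a $t$-twist replaces the coefficient $r_0(R)$ by $1/(t+1/r_0(R))$ while leaving the other (infinite) coefficients unchanged, and its effect on the braid is governed by Figure~\ref{fig:twist-2}. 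The core of the argument is then to produce a short sequence of such twists --- inserting, where convenient, $\pm1$ twists on the unknotted braid axis (which replace $\delta$ by $\delta\theta^{\mp1}$ but do not alter the $3$-manifold; cf.\ item~(a) and the Remarks) together with braid conjugacies --- which unwinds the $4$-string shadow, winds $R$ back around the blue ribbon so that the sub-braid on the blue and green strings (conjugate to $\gamma_{0/1}$) acquires the ribbon widths of $\gamma_{\kk/(\kk+1)}$, and drives the surgery coefficient on $R$ to $\infty$. At that point $R$ is erased, and the link that remains is $\bgamma_{\kk/(\kk+1)}\cup A$; by Definition~\ref{defn:gamma} this complement is $\hatM_{\kk/(\kk+1)}$, as required.

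The main obstacle is one of bookkeeping rather than of ideas. Since $\delta$ was found experimentally rather than constructed from a principle, there is no conceptual shortcut: one must follow a multi-step sequence of Rolfsen twists, axis twists and braid conjugacies through several intermediate braid diagrams, at each stage updating both the diagram (via Figures~\ref{fig:twist-1} and~\ref{fig:twist-2}) and the surgery coefficient on $R$ (via~\eqref{eq:update}), and then check that the braid one finally reaches has precisely the ribbon structure of $\gamma_{\kk/(\kk+1)}$ --- a first ribbon of width $\kk+1$, three further width-$(\kk+1)$ ribbons, the rogue string, and the added fixed string. The pseudo-Anosov and singularity data read off from the train track in Figure~\ref{fig:train_track} furnish an independent sanity check that the topological type of the answer is the expected one.
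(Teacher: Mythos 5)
Your outline has the right flavor --- and it's true that the paper's own proof is exactly a chain of Rolfsen twists, axis twists, and braid conjugacies terminating in $\gamma_{\kk/(\kk+1)}$ --- but as written the plan is missing the one genuine \emph{idea} in that chain, and without it the moves you describe cannot be made to work.

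The black cusp (the one being filled with $-4+1/\kk$) corresponds to a single $4$-string orbit of~$\delta$, so the black component~$B$ of $\bdelta\cup A$ meets the axis disk four times: $\lk(A,B)=4$. If the only components available to twist on are~$A$, $B$, and the two other components of $\bdelta$ (which carry coefficient~$\infty$ and don't help), then by~\eqref{eq:update} a twist on~$A$ changes $r(B)$ only by multiples of $\lk(A,B)^2 = 16$, and a twist on~$B$ itself sends $r(B)$ to $1/(t + 1/r(B))$. To erase~$B$ one needs to reach a coefficient of the form $1/s$ with $s\in\Z$; starting from $-4+1/\kk$ and changing by multiples of~$16$, this requires $s\bigl((16t-4)\kk+1\bigr)=\kk$ for some integers $s,t$, which already has no solution for $\kk=1$. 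So the scheme as you describe it --- axis twists plus twists on the black cusp --- provably cannot drive the black coefficient to~$\infty$. (You also label the black cusp~$R$, whereas in the paper $R$ names something else; this is not just notational, see below.)

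What the paper actually does is \emph{add} an auxiliary unknotted component~$R$ (a fixed red string threaded through the black ribbon) carrying surgery coefficient~$\infty$, forming a new braid~$\zeta$. A $+3$ twist on~$R$ unwinds three of the four black strands, reducing $B$ to a single string with $\lk(A,B)=1$ and $r(B)=-1+1/\kk$; a $+1$ twist on~$A$ then brings $r(B)$ to $1/\kk$; a $-\kk$ twist on~$B$ erases it; and the earlier twists are undone ($-1$ on~$A$, then $-3$ on~$R$) so that $R$ returns to coefficient~$\infty$ and can be erased, leaving (after several explicit conjugacies) $\bgamma_{\kk/(\kk+1)}\cup A$. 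The auxiliary $\infty$-coefficient component is the device that makes the linking numbers and coefficients tractable; your proposal neither introduces it nor offers a substitute, so the ``bookkeeping'' you defer to is not merely tedious but, in the form you propose, impossible. The fix is to incorporate this auxiliary-string trick and then actually exhibit the sequence of diagrams and coefficient updates.
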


\begin{proof}
  The left hand side of Figure~\ref{fig:pre-conj} depicts a braid $\zeta$,
  which is $\delta$ together with an extra fixed string shown in red. We
  write~$B$ and~$R$ for the black and red components of $\bzeta$, which are
  unknotted. We need to show that filling~$B$ with coefficient $r_0(B)=-4+1/\kk$
  and~$R$ with coefficient $r_0(R)=\infty$ (i.e.\ erasing~$R$ from the link
  $\bzeta\cup A$) yields the $3$-manifold $\hatM_{\kk/(\kk+1)}$.

  \begin{figure}[htbp]
    \begin{center}
      \includegraphics[width=12cm]{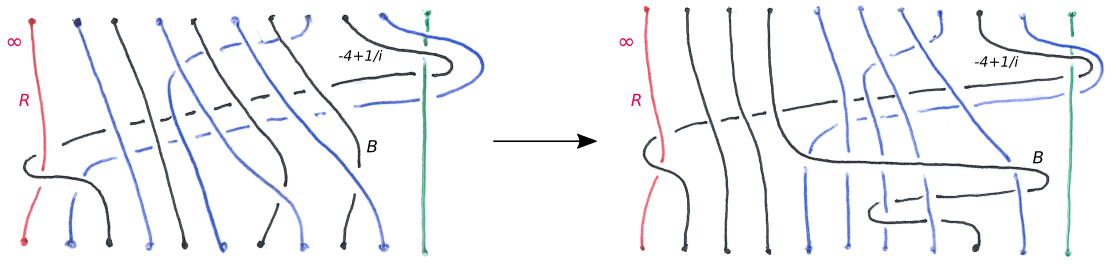}
      \caption{Conjugating $\zeta$ to prepare it for a $+3$ twist on~$R$.}
      \label{fig:pre-conj}
    \end{center}
  \end{figure}

  The braid on the right hand side of the figure is obtained by conjugating by
 $\sigma_7^{-2}\sigma_2^{-1}\sigma_4^{-1}\sigma_3^{-1}\sigma_6^{-1}\sigma_5^{-1}\sigma_4^{-1}$.
 Referring to Figure~\ref{fig:twist-1}, performing a~$+3$ twist on~$R$ yields
 the braid on the left of Figure~\ref{fig:tdf4-5}, and a conjugacy by
 $\sigma_6^{-1}\sigma_7$ gives the braid on the right hand side of the figure.
 By~\eqref{eq:update}, the updated surgery coefficients are:
  \begin{eqnarray*}
    r_1(R) &=& 1/(3+1/r_0(R)) = 1/3, \text{ and}\\ 
    r_1(B) &=& r_0(B) + 3 = -1 + 1/\kk,
  \end{eqnarray*}
  since $\lk(B, R) = 1$.

  \begin{figure}[htbp]
    \begin{center}
      \includegraphics[width=12cm]{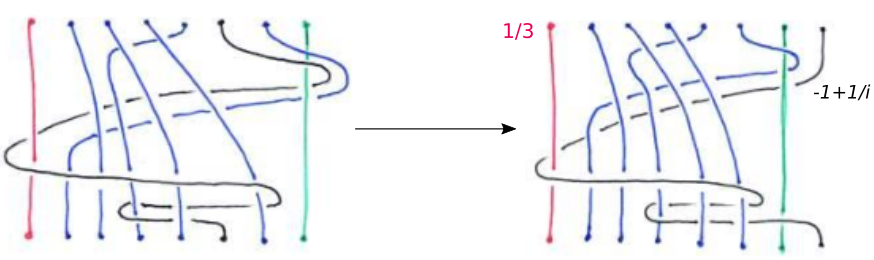}
      \caption{A $+3$ twist on~$R$, followed by a conjugacy.}
      \label{fig:tdf4-5}
    \end{center}
  \end{figure}

  Performing a $+1$ twist on the braid axis~$A$ yields the braid on the left
  hand side of Figure~\ref{fig:tdf6-7}, which a further conjugacy by
  $\sigma_7\sigma_6\sigma_5\sigma_4\sigma_3\sigma_2\sigma_1\sigma_1\sigma_2\sigma_3\sigma_4\sigma_5\sigma_6\sigma_7$
  --- to pull the black string around --- reduces to the right hand side of the
  figure. (Here and in Figure~\ref{fig:tdf8}, the parts of the blue strings
  which participate in the full twist have not been drawn, to clarify the
  diagrams.) The red component~$R$ and the black component~$B$ are now
  unlinked. The revised surgery coefficients are
  \begin{eqnarray*}
    r_2(R) &=& 1/3 + 1 = 4/3, \text{ and}\\ 
    r_2(B) &=& -1 + 1/\kk + 1 = 1/\kk,
  \end{eqnarray*}
  since $\lk(A, R)=\lk(A, B)=1$.

  \begin{figure}[htbp]
    \begin{center}
      \includegraphics[width=12cm]{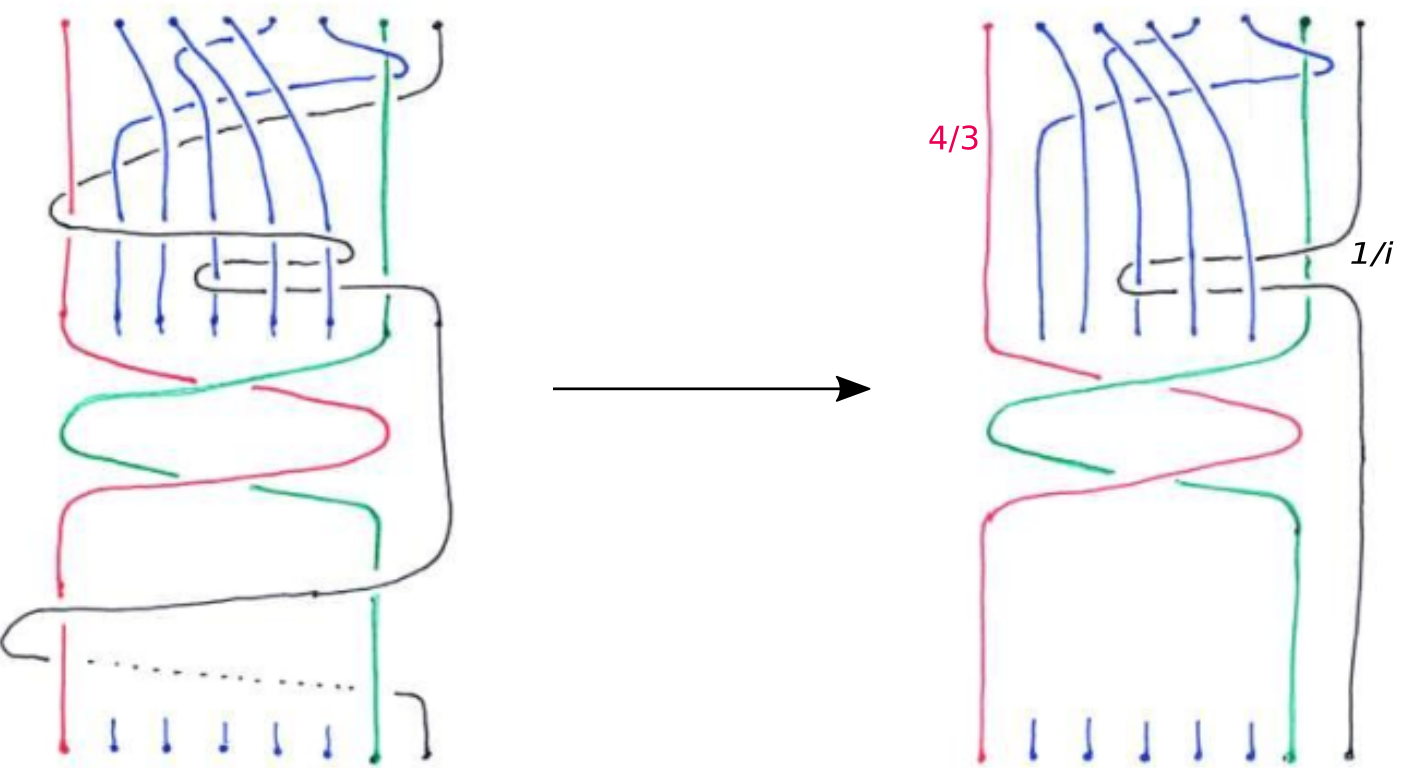}
      \caption{A $+1$ twist on the braid axis~$A$, followed by a conjugacy.}
      \label{fig:tdf6-7}
    \end{center}
  \end{figure}

  We can now carry out the surgery on~$B$. Performing a $-\kk$ twist on~$B$ yields the braid of Figure~\ref{fig:tdf8} (in which the ribbon contains~$\kk-1$ parallel strings). The surgery coefficient of~$B$ is
  \[
    r_3(B) = \frac{1}{-\kk + 1/(1/\kk)} = \infty,
  \]
  so that it can be removed (and is not shown in Figure~\ref{fig:tdf8}).
  Because~$R$ and~$B$ are unlinked, the surgery coefficient of~$R$ is
  unchanged: $r_3(R)=r_2(R)=4/3$.

  \begin{figure}[htbp]
    \begin{center}
      \includegraphics[width=4cm]{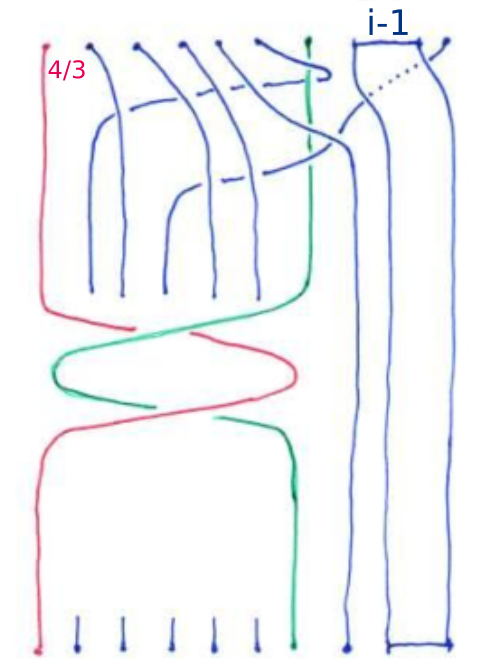}
      \caption{A $-\kk$ twist on~$B$ changes its surgery coefficient
      to~$\infty$.}
      \label{fig:tdf8}
    \end{center}
  \end{figure}

  We next perform a~$-1$ twist on~$A$, which produces the braid on the left
  hand side of Figure~\ref{fig:tdf9-10}, and changes the surgery coefficient
  of~$R$ to $r_4(R)=1/3$. A $-3$ twist on~$R$ therefore changes its coefficient
  to~$\infty$, so that it can be erased: this results in the braid on the right
  hand side of Figure~\ref{fig:tdf9-10}, in which each of the four ribbons
  contains~$\kk-1$ parallel strings.

  \begin{figure}[htbp]
    \begin{center}
      \includegraphics[width=15cm]{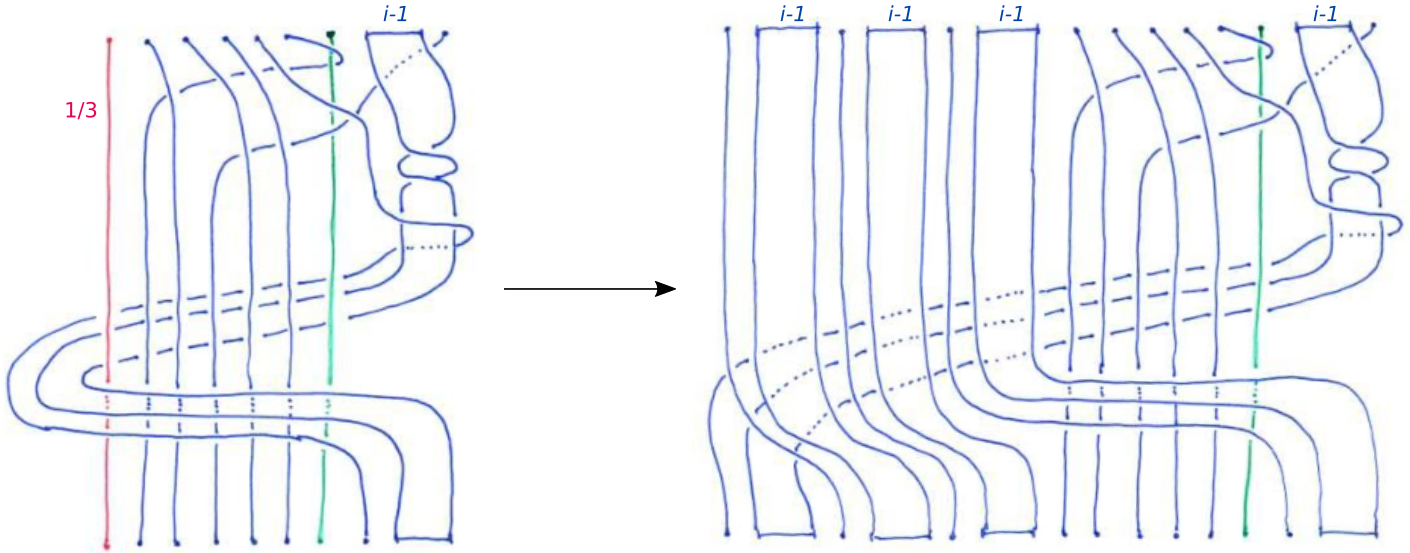}
      \caption{A $-1$ twist on~$A$ followed by a $-3$ twist on~$R$.}
      \label{fig:tdf9-10}
    \end{center}
  \end{figure}

  To complete the proof, we exhibit a braid conjugacy between the braid on the
  right hand side of Figure~\ref{fig:tdf9-10} and the braid $\gamma_{\kk/(\kk+1)}$
   --- that is, the braid of Figure~\ref{fig:gamma} with all four ribbons
       containing~$\kk+1$ parallel strings. (This conjugacy was discovered
       computationally, using sliding circuit set
       methods~\cite{cyclic1,cyclic2} for small values of~$\kk$ and
       extrapolating: the braids $\gamma_{\kk/(\kk+1)}$ have small sliding circuit
       sets but large ultra summit sets~\cite{gebhardt}.)  Two successive
       conjugacies are shown in Figure~\ref{fig:tdf11-12}. Here the first,
       second, and fourth ribbons have been enlarged by incorporating an
       additional parallel string, so that they each contain~$\kk$ parallel
       strings.

  \begin{figure}[htbp]
    \begin{center}
      \includegraphics[width=15cm]{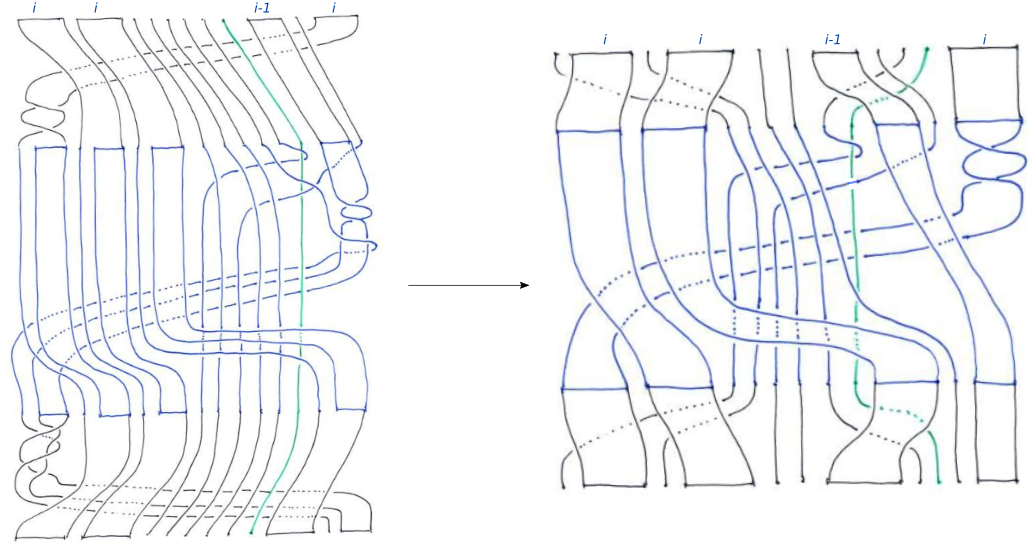}
      \caption{Successive conjugacies on the right hand side of
      Figure~\ref{fig:tdf9-10}.}
      \label{fig:tdf11-12}
    \end{center}
  \end{figure}

  Simplifying the braid on the right hand side of Figure~\ref{fig:tdf11-12} by
  isotopy of the strings yields the braid on the left hand side of
  Figure~\ref{fig:tdf13-14}. Again, we have incorporated additional parallel
  strings into ribbons, so that the first two ribbons contain~$\kk+1$ parallel
  strings, and the other two contain~$\kk$ parallel strings. A final conjugacy
  which moves the green string to the left, underneath all of the other
  strings, gives the braid on the right hand side of the figure, and
  incorporating additional parallel strings into the rightmost two ribbons
  yields $\gamma_{\kk/(\kk+1)}$ as required.

  \begin{figure}[htbp]
    \begin{center}
      \includegraphics[width=16cm]{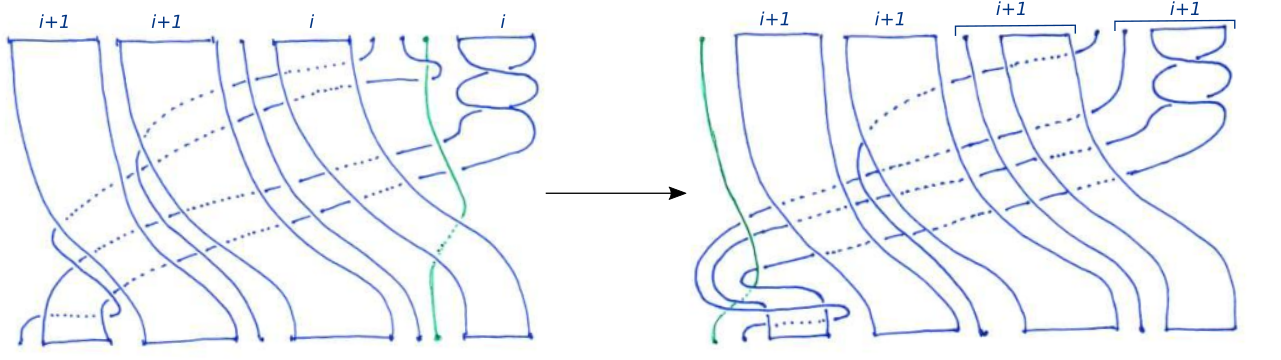}
      \caption{A simplified diagram of the right hand side of
      Figure~\ref{fig:tdf11-12}, followed by a conjugacy.}
      \label{fig:tdf13-14}
    \end{center}
  \end{figure}
\end{proof}

\begin{corollary}
  The sequence $\hatM_{\kk/(\kk+1)}$ converges geometrically to~$M$ as
  $\kk\to\infty$, and there are infinitely many distinct hyperbolic 3-manifolds
  $\hatM_{\kk/(\kk+1)}$.
\end{corollary}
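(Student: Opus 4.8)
The plan is to deduce this immediately from Theorem~\ref{thm:tdf} together with the Hyperbolic Dehn Surgery Theorem (Theorem~\ref{thm:HDST}), since the substantive work has already been done. Recall that $M = S^3\setminus(\bdelta\cup A)$ is a complete hyperbolic $3$-manifold of finite volume, as established above via the Bestvina--Handel train track computation for~$\delta$. By Theorem~\ref{thm:tdf}, for every $\kk\ge 1$ the manifold $\hatM_{\kk/(\kk+1)}$ is obtained by Dehn filling the cusp of~$M$ corresponding to the black strings of Figure~\ref{fig:delta} with surgery coefficient $r_\kk = -4 + 1/\kk$, leaving the other three cusps unfilled.

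The only thing to check is that the sequence of surgery coefficients escapes every compact subset of the slope space. Writing $r_\kk = b_\kk/a_\kk$ in lowest terms gives $b_\kk = 1-4\kk$ and $a_\kk = \kk$, which are coprime since any common divisor of $1-4\kk$ and~$\kk$ divides~$1$. Hence $a_\kk^2 + b_\kk^2 = \kk^2 + (4\kk-1)^2 \to \infty$ as $\kk\to\infty$, so the hypotheses of Theorem~\ref{thm:HDST} are met with~$L_1$ taken to be the black component. That theorem then yields both assertions simultaneously: the manifolds $\hatM_{\kk/(\kk+1)}$ converge geometrically to~$M$, and they are pairwise distinct, so in particular there are infinitely many of them.

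I do not anticipate any obstacle: the corollary is a purely formal consequence of Theorem~\ref{thm:tdf}, whose proof carries the real content (the explicit chain of twists and braid conjugacies relating~$\delta$ to~$\gamma_{\kk/(\kk+1)}$, discovered experimentally via SnapPy and sliding circuit methods), together with the verified hyperbolicity of~$M$. The only point meriting a word of care is the elementary coprimality and growth computation for the coefficients $r_\kk$ above, which guarantees that Theorem~\ref{thm:HDST} applies.
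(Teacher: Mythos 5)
Your argument is correct and matches the paper's intent: the corollary is stated without proof precisely because it is an immediate consequence of Theorem~\ref{thm:tdf} and Theorem~\ref{thm:HDST}, and you have supplied the short verification (coprimality of $1-4\kk$ and~$\kk$, and $a_\kk^2+b_\kk^2\to\infty$) needed to invoke the latter. One minor overstatement: Theorem~\ref{thm:HDST} as phrased only guarantees that each filled manifold differs from the limit~$M$, from which one concludes there are infinitely many distinct ones --- it does not literally assert pairwise distinctness --- but that weaker conclusion is exactly what the corollary claims, so the argument stands.
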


% \begin{remark}
%   The proof of convergence of the sequence $\hatM_{\kk/(\kk+1)}$ is complicated and
%   ad hoc. A more conceptual understanding of this process might make it
%   possible to prove the convergence, suggested by Figure~\ref{fig:limit_vols}
%   and related calculations, of the more general sequences of the form
%   $\hatM_{***}$ as $\kk\to\infty$, and of the collection of limits. An
%   interesting question is whether or not the set of volumes of the limit
%   manifolds $\hatM_\nu$ is bounded, and if not whether there are sequences
%   which converge to 3-manifolds with infinite volume. The set of `limit limit'
%   manifolds can be further extended by considering the analogous process for
%   sequences of HS braids which converge to irrationals, or to rationals from
%   above and below. Perhaps this would be better in the earlier lengthy remark.
% \end{remark}

\section{Acknowledgements}
The authors are grateful for the support of FAPESP grant 2016/25053-8 and CAPES
grant \mbox{88881.119100/2016-01}. AdC is partially supported by CNPq grant PQ
302392/2016-5. JGM is partially supported by Spanish Project
MTM2016-76453-C2-1-P and FEDER.

The authors appreciate the very helpful comments of an anonymous referee.
  
\bibliographystyle{amsplain}

\bibliography{limit}
\end{document}